\newtheorem{theorem}{Theorem}
\newtheorem{lemma}[theorem]{Lemma}
\newtheorem{corollary}[theorem]{Corollary}
\newtheorem{definition}{Definition}
\newtheorem{remark}{Remark}
\newcommand{\bal}[1] {\ensuremath{\left(\begin{array}{#1}}}
\newcommand{\ear} {\ensuremath{\end{array}\right)}}
\newcommand{\bals}[1] {\ensuremath{\left[\begin{array}{#1}}} % Begin Array Left Square
\newcommand{\ears} {\ensuremath{\end{array} \right] }} % End Array Right Square
\DeclareMathOperator{\trace}{tr}
\DeclareMathOperator{\diag}{diag}
\newcommand{\one} {\ensuremath{\mathds{1} }} % Vector of ones
\newcommand{\funcRdR}{\ensuremath{{f}}}
\newcommand{\funcRdRd}{\ensuremath{{f}}}
\let\leq\leqslant
\let\geq\geqslant
\newcommand{\calC}{\ensuremath{\mathcal{C}}}
\newcommand{\calD}{\ensuremath{\mathcal{D}}}
\newcommand{\calF}{\ensuremath{\mathcal{F}}}
\newcommand{\calL}{\ensuremath{\mathcal{L}}}
\newcommand{\calR}{\ensuremath{\mathcal{R}}}
\newcommand{\calT}{\ensuremath{\mathcal{T}}}
\newcommand{\calX}{\ensuremath{\mathcal{X}}}
\newcommand{\frakso}{\ensuremath{\mathfrak{so}}}
\newcommand{\bmat}{\begin{matrix}}
\newcommand{\emat}{\end{matrix}}
\newcommand{\bbm}{\begin{bmatrix}}
\newcommand{\ebm}{\end{bmatrix}}
\newcommand{\bpm}{\begin{pmatrix}}
\newcommand{\epm}{\end{pmatrix}}
\newcommand{\bse}{\begin{subequations}}
\newcommand{\ese}{\end{subequations}}
\newcommand{\beq}{\begin{equation}}
\newcommand{\eeq}{\end{equation}}
\newcommand{\ben}{\begin{enumerate}}
\newcommand{\een}{\end{enumerate}}
\newcommand{\beni}{\renewcommand{\labelenumi}{\roman{enumi}.}
\renewcommand{\theenumi}{\roman{enumi}}\begin{enumerate}}
\newcommand{\eeni}{\end{enumerate}\renewcommand{\labelenumi}{\arabic{enumi}.}
\renewcommand{\theenumi}{\arabic{enumi}}}
\newcommand{\bena}{\renewcommand{\labelenumi}{\alpha{enumi}.}
\renewcommand{\theenumi}{\alpha{enumi}}\begin{enumerate}}
\newcommand{\eena}{\end{enumerate}\renewcommand{\labelenumi}{\arabic{enumi}.}
\renewcommand{\theenumi}{\arabic{enumi}}}
\newcommand{\bit}{\begin{itemize}}
\newcommand{\eit}{\end{itemize}}
\newcommand{\R}{\ensuremath{\mathbb R}}
\title{\LARGE \bf
Asymptotic and finite-time almost global attitude tracking: representations free approach}
\author{Jieqiang Wei, Christos Verginis, Junfeng Wu, Dimos V. Dimarogonas, Henrik Sandberg and Karl H. Johansson  % <-this %
%stops a space
\thanks{*This work is supported by Knut and Alice Wallenberg Foundation, Swedish Research Council, and Swedish Foundation for Strategic Research.}
\thanks{Jieqiang Wei, Christos Verginis, Dimos V. Dimarogonas, Henrik Sandberg, Karl H. Johansson are with the ACCESS Linnaeus Centre, School of Electrical Engineering. 
 KTH Royal Institute of Technology,
 SE-100 44 Stockholm, Sweden.
 {\tt\small \{jieqiang, verginis, dimos, hsan, kallej\}@kth.se}.
 \newline
 Junfeng Wu is with College of Control Science and Engineering, Zhejiang University, Hangzhou, China. 
 {\tt\small jfwu@zju.edu.cn}
}}
\begin{document}

\maketitle

\begin{abstract}\label{s:Abstract}
In this paper, the attitude tracking problem is considered using the rotation matrices. Due to the inherent topological restriction, it is impossible to achieve global attractivity with any continuous attitude control system on $SO(3)$. Hence in this work, we propose some control protocols achieve almost global tracking asymptotically and in finite time, respectively. In these protocols, no world frame is needed and only relative state informations are requested. For finite-time tracking case, Filippov solutions and non-smooth analysis techniques are adopted to handle the discontinuities. Simulation examples are provided to verify the performances of the control protocols designed in this paper.
\end{abstract}

\begin{keywords}
Agents and autonomous systems, Attitude tracking, Nonlinear systems
\end{keywords}

\IEEEpeerreviewmaketitle

\section{Introduction}\label{s:Introduction}

Originally motivated by aerospace developments in the middle of the last century~\cite{Bower1964,Kowalik1970}, the rigid body attitude control problem has continued to attract attention with many applications such as aircraft attitude control \cite{Athanasopoulos2014,Tsiotras1994}, spacial grabbing technology of manipulators~\cite{ZXLi}, target surveillance by unmanned vehicles~\cite{pettersen1996position}, and camera calibration in computer vision~\cite{ma2012invitation}. Furthermore, the configuration space of rigid-body attitudes is the compact non-Euclidean manifold $SO(3)$, which poses theoretical challenges for attitude control \cite{Bhat00scl}. The coordination of multiple attitudes is of high interest both in academic and industrial research, e.g., \cite{dong2016attitude,sarlette2009autonomous,Thunberg2016auto}.

Here we review some related existing work. As attitude systems evolves on $SO(3)$---a compact manifold without a boundary---there exists no continuous control law that achieves global asymptotic stability \cite{brockett83asymptotic}. Hence one has to resort to some hybrid or discontinuous approaches. In \cite{lee2015global}, exponential stability is guaranteed for the tracking problem for a single attitude. 
In \cite{li2012decentralized} the authors considered the synchronization problem of attitudes under a leader-follower architecture. In \cite{pereira2016common}, the authors provided a local result on attitude synchronization. Based on a passivity approach, \cite{ren2010distributed} proposed a consensus control protocol for multiple rigid bodies with attitudes represented by modified Rodrigues parameters. In \cite{tron2012intrinsic}, the authors provided a control protocol in discrete time that achieves almost global synchronization, but it requires  global knowledge of the graph topology.
Although there exists no continuous control law that achieves global asymptotic stability, a methodology based on the axis-angle representation obtains almost global stability for attitude synchronization under directed and switching interconnection topologies is proposed in \cite{Thunberg2014auto}. 

Besides these agreement results, some tracking results are reviewed as follows.
%\cite{lee2012relative,WJ15ac,wu2013spacecraft} provided distributed schemes for more general formation control of attitude in space $SO(3)$. 
In \cite{LEE2012}, an almost global attitude tracking control system based on an alternative attitude error function is proposed. This attitude error function is not differentiable at certain attitudes and employs the Frobenius attitude difference,
and the resulting control input is not continuous. In \cite{Lee2010}, one tracking protocol is proposed for unmanned aerial vehicle (UAV), again using Frobenius state differences. So far,  finite-time attitude tracking problems are studied in different settings, e.g., \cite{Du2011,Zong2016}. In \cite{Du2011}, finite-time attitude synchronization was investigated in a leader-follower architecture, namely all the followers tracking the attitude of the leader. In \cite{Zong2016}, quaternion representation was employed for finite-time attitude synchronization. Both works used continuous control protocols with high-gain.

%Among all the properties of attitude synchronization schemes, the finite-time convergence is an important one, because in practice it is desired that the system reaches the target configuration within a certain time-interval; consider, for instance, satellites in space that shall face a certain direction 
%as they move in their orbits, or cameras that shall reach a certain formation to quickly follow an object.

%Among all the studies about attitude synchronization, finite time convergence problem is an important topic and has been mainly studied using continuous control protocols, see e.g., \cite{Du2011,Zong2016}. In this paper we shall focus on the finite time attitude synchronization problem using discontinuous control laws. The discontinuous strategy is motivated by the success of binary controller using signum function in the scalar multi-agent systems, see e.g. \cite{Chen2011,LiuLam2016,Cortes2006,Hui2010}. Nonsmooth analysis is employed to prove the finite-time synchronization rigorously.

In this paper, we shall focus on the attitude tracking problem, based on the rotation matrices in $SO(3)$. The contributions are threefolds. First, based on the two types of relative state difference, geodesic or Frobenius, two types of control schemes are proposed. Let us refer these two type of protocols as geodesic and Frobenius controller, respectively. In both types of the controllers, only the relative state informations, with no world frame, are needed. Second, for both geodesic and Frobenius controllers, we propose one for asymptotic tracking and one for finite-time tracking. More precisely, sign function is employed for the finite-time case. Since these control schemes are discontinuous,  nonsmooth analysis is employed throughout the paper. Third, all the controllers designed in this paper achieves almost global tracking.

The structure of the paper is as follows. In Section \ref{s:Preliminaries}, we review some results for the special orthogonal group $SO(3)$ and introduce some terminologies and notations in the context of discontinuous dynamical systems. Section \ref{ss:basic_model} presents the problem formulation of the attitude tracking. The main results of the stability analysis of the finite-time convergence are presented in Section~\ref{s:main}, where two types of controllers, using geodesic and Frobenius state differences, respectively, are proposed to achieve almost global tracking. The simulations of the main results are in Section \ref{s:simulation}. Then, in Section \ref{s:conclusion}, the paper is concluded.

\textbf{Notations.} With $\R_-,\R_+, \R_{\geq 0}$ and $\R_{\leqslant 0}$ we denote the sets of negative, positive, non-negative, non-positive  real numbers, respectively. The rotation group $SO(3)=\{R\in\R^{3\times 3}: RR^\top = I, \det R= 1 \}.$ The vector space of real $n$ by $n$ skew symmetric matrices is denoted as $\frakso(3)$. The vectors $\one_n$ and $\mathbf{0}_n$ represents a $n$-dimensional column vector with each entry being $1$ and $0$, respectively. 
%$\|\cdot\|_p$ denotes the $\ell_p$-norm and  the $\ell_2$-norm is denoted simply as $\|\cdot\|$ without a subscript. 
We denote 
\begin{align*}
E_1 & = \diag{[-1,-1,1]} \\
E_2 & = \diag{[-1,1,-1]} \\
E_3 & = \diag{[1,-1,-1]}, 
\end{align*}
respectively.

%The scalar sign function is defined as
%\begin{equation}\label{e:scalar_sign}
%\sign(a) = \begin{cases}
%1 & \textrm{ if } a>0,\\
%0 & \textrm{ if } a=0,\\
%-1 & \textrm{ if } a<0.
%\end{cases}
%\end{equation}

%%%%%%%%%%%%%%%%%%%%%%%%%%%%%%%%%%%%%%%%%%%%%%%%%%%%%%%%%%%%%%%%%%%%%%%%%%%%%%%%

\section{Preliminaries}\label{s:Preliminaries}

In this section, we briefly review some essentials about rigid body attitudes \cite{schaub},  and give some definitions for Filippov solutions \cite{filippov1988}.

%For any real matrix $A\in\R^{n\times n}$, its exponential $e^{A}$ is a well-defined matrix.
Next lemma follows from Euler's Rotation Theorem.
\begin{lemma}
The exponential map
\begin{equation}
\exp :\frakso(3)\rightarrow SO(3)
\end{equation}
is surjective.
\end{lemma}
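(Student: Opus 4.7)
The plan is to invoke Euler's Rotation Theorem directly, as the hint in the excerpt suggests, and then construct an explicit preimage under $\exp$ via Rodrigues' formula. Given an arbitrary $R\in SO(3)$, Euler's Rotation Theorem guarantees the existence of a unit axis $\omega\in\R^3$ and an angle $\theta\in[0,\pi]$ such that $R$ is the rotation by $\theta$ about $\omega$. The task is then to exhibit an $\Omega\in\frakso(3)$ with $\exp(\Omega)=R$.

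First I would set up the standard isomorphism between $\R^3$ and $\frakso(3)$: send $\omega=(\omega_1,\omega_2,\omega_3)^\top$ to the skew-symmetric matrix $\hat\omega$ defined by $\hat\omega v=\omega\times v$. Then I would define $\Omega=\theta\hat\omega\in\frakso(3)$ and compute $\exp(\Omega)$ directly from the power series. Using $\hat\omega^3=-\hat\omega$ (which holds whenever $\|\omega\|=1$), the series collapses into Rodrigues' formula
\begin{equation*}
\exp(\theta\hat\omega)=I+\sin(\theta)\,\hat\omega+(1-\cos(\theta))\,\hat\omega^2.
\end{equation*}
A short direct verification, choosing coordinates with $\omega=e_3$ (for instance), shows that this matrix is exactly the rotation by $\theta$ about $\omega$, hence equals $R$. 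Surjectivity of $\exp$ follows because $R\in SO(3)$ was arbitrary.

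The main obstacle, such as it is, is really just packaging Euler's theorem and verifying the collapse of the exponential series. Neither step is deep: Euler's theorem is a spectral statement (every $R\in SO(3)$ has $1$ as an eigenvalue, and the orthogonal complement of the axis carries a planar rotation), and the identity $\hat\omega^3=-\hat\omega$ for unit $\omega$ reduces the infinite series to two trigonometric partial sums. One minor care point is the edge case $\theta=0$, where $R=I$ and one simply takes $\Omega=0$; and $\theta=\pi$, where the axis $\omega$ is determined only up to sign, but this ambiguity does not affect existence of a preimage. No continuity or injectivity claims are needed, so the proof is essentially a two-line construction after citing Euler.
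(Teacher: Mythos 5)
Your proof is correct and follows essentially the same route as the paper, which simply notes that the lemma follows from Euler's Rotation Theorem and records Rodrigues' formula together with the observation that $\exp(\hat{p})$ is the rotation through angle $\|p\|$ about the axis $p$. Your explicit construction of the preimage $\theta\hat{\omega}$ and the handling of the $\theta=0$ and $\theta=\pi$ edge cases just spell out the details the paper leaves implicit.
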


The tangent space at a point $R\in SO(3)$ is 
\begin{align}
T_{R}SO(3)=\{R\omega: \omega\in \frakso(3) \}.
\end{align}

For $SO(3)$, two exponential maps are needed, namely Riemannian exponential at the point $R$ and Lie group exponential, denoted $\exp_R$ and $\exp$ respectively. 

For any $p=[p_1,p_2,p_3]^\top\in\R^3$ and $\hat{p}\in\frakso(3)$ given as
\begin{equation}\label{e:basic:hat}
 \hat{p}:=
 \begin{pmatrix}
   0 &-p_3 &p_2\\
   p_3& 0 &-p_1\\
   -p_2& p_1 & 0
 \end{pmatrix},
\end{equation}
Rodrigues' formula is the right-hand side of
\begin{equation}\label{e:exp-map}
\exp(\hat{p}) = 
\begin{cases}
I_3+\frac{\sin(\|p\|)}{\|p\|}\hat{p}+\frac{1-\cos(\|p\|)}{\|p\|^2}(\hat{p})^2, & \textrm{ if } \|p\|\neq 0, \\
I_3, & \textrm{ if } \|p\|= 0.
\end{cases}
\end{equation}
The matrix $\exp(\hat{p})$ is the rotation matrix through an angle $\|p\|$ anticlockwise about the axis $p$.
The Riemannian exponential map $\exp_R:T_{R}SO(3)\rightarrow SO(3)$ is defined as 
\begin{align}
\exp_{R_1}(v) = \gamma(1)
\end{align}
where 
\begin{align}
\gamma(t) = R_1(R_1^\top R_2)t, \quad 0\leq t\leq 1
\end{align} 
is the length of the shortest geodesic curve that connect $R_1$ and $R_2$, and $\gamma'(0)=v$. The relation between these exponential maps is $\exp_R(RW)=R\exp(W)$ for any $RW\in T_{R}SO(3)$.

The principle logarithm for a matrix $R\in SO(3)$ is defined as
\begin{equation}\label{e:log-map}
\log(R) = 
\begin{cases}
\frac{\theta}{2\sin(\theta)}(R-R^\top), & \textrm{ if } \theta\neq 0,\\
\mathbf{0}, & \textrm{ if } \theta= 0
\end{cases}
\end{equation}
where $\theta=\arccos(\frac{\trace(R)-1}{2})$. We define $\log(I_3)$ as the zero matrix in $\R^{3\times 3}$. Note that \eqref{e:log-map} is not defined for $\theta=\pi$.

There are three commonly used metrics in $SO(3)$. A straightforward one is Frobenius (chordal) metric 
\begin{align}\label{e:frobenius}
d_F(R_1, R_2)&=\|R_1-R_2\|_F \\
&=\sqrt{6-\trace(R^\top_1R_2)-\trace(R^\top_2R_1)},
\end{align} 
which is Euclidean distance of the ambient space $\R^{3\times 3}$. Another metric employs the Riemannian structure, namely the Riemannian (geodesic) metric 
\begin{align}\label{e:geodesic}
d_R(R_1, R_2)=\frac{1}{\sqrt{2}}\|\log(R^{-1}_1R_2)\|_F.
\end{align}
The third one is hyperbolic metric defined as $d_H(R_1,R_2)= \|\log(R_1)-\log(R_2)\|_F$. 

%  $SO(3)$ (in fact SO(n)) is a compact space and compact subset of $\R^{3\times 3}$.

%\begin{proposition}[Euler]
%Any orientation $R\in SO(3)$ is equivalent to a rotation about a fixed axis $\omega\in\R^3$ through an angle $\theta\in[-\pi, \pi)$. \todo{This one may be deleted, not very useful.}
%\end{proposition}

One important relation between $SO(3)$ and $\R^3$ is that the open ball $B_\pi(I)$ in $SO(3)$ with radius $\pi$ around the identity, which is almost the whole $SO(3)$, is diffeomorphic to the open ball $B_{\pi}(0)$ in $\R^3$ via the logarithmic and the exponential map defined in \eqref{e:log-map} and \eqref{e:exp-map}.

\medskip

In the remainder of this section, we discuss Filippov solutions \cite{Fischer2013}. Consider the system
\begin{align}\label{e:time-varying-equation}
\dot{x} = f(x,t)
\end{align} 
where $x(t)\in\calD\subset \R^n$ denotes the state vector, $f:\calD\times [0,\infty)\rightarrow \R^n$ is Lebesgue measurable and essentially locally bounded, uniformly in $t$ and $\calD$ is an open and connected set.

\begin{definition}[Filippov solution \cite{filippov1988,Fischer2013}]
A function $x:[0,\infty)\rightarrow \R^n$ is called a solution of \eqref{e:time-varying-equation} on the interval $[0,\infty)$ if $x(t)$ is absolutely continuous and for almost all $t\in[0,\infty)$
\begin{align}\label{e:differential_inclusion}
\dot{x} \in \calF[f](x(t),t)
\end{align} 
where $\calF[f](x(t),t)$ is an upper semi-continuous, nonempty, compact and convex valued map on $\calD$, defined as
\begin{align}
\calF[f](x(t),t) := \bigcap_{\delta>0}\bigcap_{\mu(S)=0}\overline{\mathrm{co}}\big\{ f(B(x,\delta)\backslash S,t) \big\},
\label{eqn_Filippovdef}
\end{align}
where $S$ is a subset of $\R^n$, $\mu$ denotes the Lebesgue measure, $B(x,\delta)$ is the ball centered at $x$ with radius $\delta$ and $\overline{\mathrm{co}}\{\calX\}$ denotes the convex closure of a set $\calX$.
\end{definition}
If $\funcRdRd$ is continuous at $x$, then $ \calF[\funcRdRd](x)$ contains only the point $\funcRdRd(x)$.

%\begin{lemma}[Calculus for $\calF$ \cite{paden1987}]\label{p:calculus for Filippov}
%The following properties hold for the Filippov set-valued map (\ref{eqn_Filippovdef}):
%	\begin{enumerate}
%		\item[\textit{(i)}] Assume that $f:\R^m\rightarrow\R^n$ is locally bounded. Then $\exists N_f\subset \R^m$, $\mu(N_f)=0$ such that $\forall N\subset\R^m, \mu(N)=0$,
%		\begin{equation}
%		\calF[f](x)=\mathrm{co}\{\lim_{i\rightarrow\infty} f(x_i)\mid x_i\rightarrow x, x_i\notin N_f\cup N \}.
%		\end{equation}
%		where $\mathrm{co}\{\calX \}$ is the convex hull of the set $\calX$.
%		\item[\textit{(ii)}] Assume that $f_j:\R^m\rightarrow \R^{n_j}$, $j=1,\ldots,N$ are locally bounded, then
%		\begin{equation}
%		\calF\bigg[ \bigtimes_{j=1}^N f_j \bigg](x) \subset \bigtimes_{j=1}^N\calF[f_j](x). \footnote{$\bigtimes$ means Cartesian product.}
%		%notation and column vector notation are used interchangeably.}
%		\end{equation}
%		\item[\textit{(iii)}] Let $g:\R^m\rightarrow\R^n$ be $C^1$, i.e., continuously differentiable, $\rank Dg(x)=n$, where $Dg$ is the Jacobian matrix of $g$,  and $f:\R^n\rightarrow\R^p$ be locally bounded; then
%		\begin{equation}
%		\calF[f\circ g](x)=\calF[f](g(x)).
%		\end{equation}
%		\item[\textit{(iv)}] Let $g:\R^m\rightarrow\R^{p\times n}$ (i.e. matrix valued) be $C^0$ and $f:\R^m\rightarrow\R^n$ be locally bounded; then
%		\begin{equation}
%		\calF[gf](x)=g(x)\calF[f](x),
%		\end{equation}
%		where $gf(x):=g(x)f(x)\in\R^p$.
%	\end{enumerate}
%\end{lemma}

A Filippov solution is \emph{maximal} if it cannot be extended forward in time, that is, if it is not the result of the truncation of another solution with a larger interval of definition. Next, we introduce invariant sets, which will play a key part further on. Since Filippov solutions are not necessarily unique, we need to specify two types of invariant sets. A set $\calR\subset\R^n$ is called \emph{weakly invariant} if, for each $x_0\in \calR$, at least one maximal solution of \eqref{e:differential_inclusion} with initial condition $x_0$ is contained in $\calR$. Similarly, $\calR\subset \R^n$ is called \emph{strongly invariant} if, for each $x_0\in \calR$, every maximal solution of \eqref{e:differential_inclusion} with initial condition $x_0$ is contained in $\calR$. For more details, see \cite{cortes2008, filippov1988}. We use the same definition of regular function as in \cite{Clarke1990optimization} and recall that any convex function is regular. And any $\calC^1$ continuous function is regular.

%To introduce the notion of regular function, we need to first define the right directional derivative and the generalized right directional derivative. Let $\funcRdR$ be a map from $\R^n$ to $\R$. The right directional derivative of $\funcRdR$ at $x$ in the direction of $v\in \R^n$ is defined as
%\begin{equation*}
%\funcRdR'(x;v)=\lim_{h\rightarrow 0^+} \frac{\funcRdR(x+hv)-\funcRdR(x)}{h},
%\end{equation*}
%when this limit exists. The generalized derivative of $\funcRdR$ at $x$ in the direction of $v\in \R^n$ is given by
%\begin{equation*}
%\begin{aligned}
%\funcRdR^o(x;v) & =\limsup_{\begin{subarray}{c}
%	y\rightarrow x  \\
%	h\rightarrow 0^+
%	\end{subarray}}
%\frac{\funcRdR(y+hv)-\funcRdR(y)}{h} \\
%& = \lim_{\begin{subarray}{c}
%	\delta \rightarrow 0^+ \\
%	\epsilon \rightarrow 0^+
%	\end{subarray}}
%\sup_{\begin{subarray}{c}
%	y\in B(x,\delta)\\
%	h\in[0,\epsilon)
%	\end{subarray}}
%\frac{\funcRdR(y+hv)-\funcRdR(y)}{h}.
%\end{aligned}
%\end{equation*}
%We call the function $\funcRdR$ \emph{regular} at $x$ if $ \funcRdR'(x;v)$ and
%$\funcRdR^o(x;v)$ are equal for all $v \in \R^n$. For example, a convex function is regular (see, e.g., \cite{Clarke1990optimization}). \bb{Do we only need to define all these derivatives to define what regularity is? Jay: Yes, this is how regularity being defined.}

For $V:\R^n\times [0,\infty)\rightarrow\R$ locally Lipschitz in $(x,t)$, the \emph{generalized gradient} $\partial V$ is defined by
\begin{align*}
\partial V(x,t):= & \mathrm{co}\Big\{\lim_{i\rightarrow\infty} \nabla
V(x_i,t_i)\mid (x_i,t_i)\rightarrow (x,t), \\
& (x_i,t_i)\notin S\cup \Omega_{\funcRdR} \Big\},
\end{align*}
where $\nabla$ is the gradient operator, $\Omega_{\funcRdR} \subset\R^n\times [0,\infty)$ is the set of points where $V$ fails to be differentiable and $S\subset\R^n\times [0,\infty)$ is a set of measure zero that can be
arbitrarily chosen to simplify the computation, since the resulting set $\partial V(x,t)$ is independent of the choice of $S$ \cite{Clarke1990optimization}.

Given a  set-valued map $\calT:\R^n\times  [0,\infty)\rightarrow 2^{\R^n}$, the \emph{set-valued Lie derivative} $\calL_{\calT}V$ of a locally Lipschitz function $V:\R^n\times [0,\infty) \rightarrow \R$  with respect to $\calT$ at $(x,t)$ is defined as
\begin{equation}\label{e:set-valuedLie}
\begin{aligned}
\calL_{\calT}V(x,t) := & \big\{ a\in\R \mid \exists\nu\in\calT(x) \textnormal{ such that } \\
& \quad \zeta^T
\begin{bmatrix}
\nu \\ 1
\end{bmatrix}
=a,\, \forall \zeta\in \partial V(x,t)\big\}.
\end{aligned}
\end{equation}

\section{Problem formulation}\label{ss:basic_model}

%Then the kinetic of any elements in $R_i\in SO(3)$, with the original attitude coordination, is given by 
%\begin{align}
%\dot{R}_i = R_i \omega_i,
%\end{align}  
%where $\omega_i\in \frakso(3)$. We denote $\boldsymbol{\omega} = \{\omega_i\}_{i\in\calI}$.

In this paper we consider attitude tracking problem. The basic model can be considered as two agent network where the follower  tracks the attitude of the target. We denote the world frame as $\calF_w$, the instantaneous body frame of the target and the follower as $\calF_r$ and $\calF_1$, respectively. Let $R_r(t), R_1(t)\in SO(3)$ be the attitude of $\calF_r$ and $\calF_1$ relative to $\mathcal{F}_w$ at time $t$.
%of $\calF_i$ in the world frame $\mathcal{F}_w$ at time $t$,

Recall that the tangent space at a point $R\in SO(3)$ is 
\begin{align}
T_{R}SO(3)=\{R\omega: \omega\in \frakso(3) \}.
\end{align}
Then the kinematics of the two attitudes are given by \cite{schaub}
\begin{equation}\label{e:basic_model_comp}
\dot{R}=\diag(R_r,R_1)\omega
\end{equation}
where
\begin{equation}
\begin{aligned}
R & =[R^\top_r,R^\top_1]^\top, \\
\omega & =[\omega^\top_r, \omega^\top_1]^\top,
\end{aligned}
\end{equation}
where $\omega_1$ is the control input to design

By asymptotic and finite time attitude tracking we mean that for the multi-agent system \eqref{e:basic_model_comp}, the absolute rotations of agent 1 track the rotation of the target in the world frame $\calF_w$ asymptotically and in finite time, respectively. In other words,
\begin{align*}
& R_1 \rightarrow R_r,  \textnormal{ as } t\rightarrow \infty, \textnormal{ and } \\
\exists T>0, \textnormal{ s.t. } & R_1 \rightarrow R_r,  \textnormal{ as } t\rightarrow T,
\end{align*}
respectively.

\section{Main result: single agent tracking}\label{s:main}

In this section, we first assume that the desired velocity $\omega_r(t)\in \frakso(3)$ and the geodesic difference  are available to the agent 1.  
%And there is no external disturbance.
Here we present two controllers as  
\begin{align}
\omega_{1,a} & =  \log(R^{-1}_1R_r) + \omega_r, \label{e:Asy-single} \\
\omega_{1,f} & = \frac{1}{\|\log(R^{-1}_1R_r)\|_F} \log(R^{-1}_1R_r) + \omega_r,\label{e:FTT-single}
\end{align}
which will be proved to achieve asymptotic and finite-time tracking, respectively.

As discontinuities are introduced if the controller \eqref{e:FTT-single} is employed, we shall understand the trajectories in the sense of Filippov, namely an absolutely continuous function $x(t)$ satisfying the differential inclusion
\begin{equation}\label{e:inclusion1}
\begin{aligned}
\begin{bmatrix}
\dot{R}_r \\ \dot{R}_1
\end{bmatrix} & 
\in 
\begin{bmatrix}
R_r \omega_r \\ \calF[R_1 \omega_{1,f}]
\end{bmatrix} \\
& =: \calF_1
\end{aligned}
\end{equation}
for almost all time, where we used Theorem 1(5) in \cite{paden1987}.

\begin{theorem}\label{thm: known-ref-geod}
Consider system \eqref{e:basic_model_comp}. Assume the system initialized without singularity, i.e., $ \arccos(\frac{\trace(R^\top_r(0)R_1(0))-1}{2}) \neq \pi$. Then 
\begin{enumerate}
\item the singularity is avoided for all time for both controller \eqref{e:Asy-single} and \eqref{e:FTT-single};
\item the attitude $R_1$ tracks $R_r$ exponentially and in finite time, respectively, by \eqref{e:Asy-single} and \eqref{e:FTT-single}. For \eqref{e:FTT-single}, the conclusion holds for all the solutions.
\end{enumerate} 
\end{theorem}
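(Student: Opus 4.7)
The plan is to reduce each of the two closed-loop systems to a scalar differential equation on the geodesic error angle $\theta(t):=\arccos\!\bigl(\tfrac{\trace(R_1^{\top}R_r)-1}{2}\bigr)\in[0,\pi]$, from which the exponential rate for \eqref{e:Asy-single} and the finite-time rate for \eqref{e:FTT-single} read off directly. First I would introduce the error matrix $E:=R_1^{\top}R_r\in SO(3)$. Differentiating and using \eqref{e:basic_model_comp} together with $\omega_r,\omega_1\in\frakso(3)$ gives $\dot E = E\omega_r-\omega_1 E$, and the cyclic property of trace immediately yields
\begin{equation*}
\trace(\dot E) = -\trace\bigl((\omega_1-\omega_r)\,E\bigr).
\end{equation*}
A short computation from Rodrigues' formula \eqref{e:exp-map} and the identities $\trace(\hat n)=\trace(\hat n^{3})=0$, $\trace(\hat n^{2})=-2$ for any unit $n\in\R^{3}$ produces the key identity $\trace\bigl(\log(E)\,E\bigr)=-2\theta\sin\theta$, valid on $\theta\in[0,\pi)$. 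Combining this with $\trace(\dot E)=-2\sin\theta\,\dot\theta$ (which follows from $\trace(E)=1+2\cos\theta$) and substituting the two controllers in turn collapses the matrix dynamics to the scalar equations $\dot\theta=-\theta$ under \eqref{e:Asy-single} and $\dot\theta=-1/\sqrt{2}$ under \eqref{e:FTT-single}, valid on $(0,\pi)$.

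For the asymptotic controller this immediately gives $\theta(t)=\theta(0)e^{-t}$, which is strictly monotonically decreasing, so the assumption $\theta(0)<\pi$ propagates to $\theta(t)\in[0,\pi)$ for every $t\geq 0$. This simultaneously establishes singularity avoidance (part~1) and the exponential tracking claim of part~2 for \eqref{e:Asy-single}. For the finite-time controller I would interpret trajectories in the Filippov sense through the inclusion $\calF_1$ in \eqref{e:inclusion1}. On the open set $\{R_1\neq R_r\}\cap\{\theta<\pi\}$ the control law $\omega_{1,f}$ is continuous and single-valued, so the scalar reduction above applies and every Filippov solution satisfies $\dot\theta=-1/\sqrt{2}$ almost everywhere; integration yields $\theta(t)=\theta(0)-t/\sqrt{2}$ and a hitting time $T\leq\sqrt{2}\,\theta(0)$. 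I would package this rigorously via the locally Lipschitz Lyapunov candidate $V=\tfrac{1}{2}\theta^{2}$ (smooth on $\{\theta<\pi\}$) whose set-valued Lie derivative $\calL_{\calF_1}V$ satisfies $\max\calL_{\calF_1}V\leq -\sqrt{V}$ on $\{\theta>0\}$, and invoke standard finite-time theorems for differential inclusions.

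The main obstacle is the discontinuity of \eqref{e:FTT-single} at $R_1=R_r$ combined with the ``every Filippov solution'' clause of (2). Filippov regularization at $R_1=R_r$ replaces $\omega_{1,f}-\omega_r$ by the closed Frobenius unit ball in $\frakso(3)$, so $\omega_{1,f}\equiv\omega_r$ is an admissible selection; one must therefore verify that the diagonal $\{R_1=R_r\}$ is \emph{strongly} invariant, not merely weakly invariant. This is obtained by observing that every element of $\calL_{\calF_1}V$ at $\theta=0$ equals zero while $\max\calL_{\calF_1}V\leq-\sqrt{V}$ on $\{\theta>0\}$: absolute continuity of $V$ along any Filippov solution then forces $V\equiv 0$ after the first hitting time. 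Avoidance of the singularity at $\theta=\pi$ follows, as in the asymptotic case, from monotonic decrease of $\theta$, completing part~1 for \eqref{e:FTT-single}.
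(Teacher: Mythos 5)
Your proposal is correct, and it reaches the conclusion by a genuinely different (and in fact tighter) route than the paper. The paper never collapses the dynamics to a scalar ODE: it runs two separate Lyapunov-type computations, first showing $\dot\theta\leq 0$ (resp.\ $\calL_{\calF_1}\theta\subset\R_-$) via the matrix gradients $\partial\theta/\partial R_r$, $\partial\theta/\partial R_1$ to rule out the singularity, and then differentiating $W=d_R^2(R_r,R_1)$ to get $\dot W=-2W$ for \eqref{e:Asy-single} and $\calL_{\calF_1}V=\{-\alpha\sqrt{2}\,V^{\beta}\}$ for $V=W^{\alpha}$, $\beta=\tfrac{2\alpha-1}{2\alpha}$, for \eqref{e:FTT-single}, invoking LaSalle--Yoshizawa and a standard finite-time comparison. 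Your reduction via $E=R_1^{\top}R_r$, $\trace(\dot E)=-\trace((\omega_1-\omega_r)E)$ and the identity $\trace(\log(E)E)=-2\theta\sin\theta$ is equivalent in substance (note $W=\theta^{2}$ on $B_\pi(I)$, so the paper's $\dot W=-2W$ is exactly your $\dot\theta=-\theta$, and its $\dot V=-\alpha\sqrt2 V^\beta$ is your $\dot\theta=-1/\sqrt2$), but it buys two things the paper does not make explicit: closed-form rates ($\theta(t)=\theta(0)e^{-t}$ and the settling-time bound $T\leq\sqrt2\,\theta(0)$), and a cleaner treatment of the ``every Filippov solution'' clause --- you correctly identify that the Filippov regularization at $R_1=R_r$ contains the selection $\omega_{1,f}=\omega_r$ and argue strong invariance of the diagonal from $\calL_{\calF_1}V=\{0\}$ at $\theta=0$ together with $\max\calL_{\calF_1}V\leq-\sqrt V$ off it, whereas the paper disposes of this point in one terse sentence. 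One cosmetic caution: work with $V=\tfrac12\theta^{2}$ (as you do) rather than $\theta$ itself near $\theta=0$, since $\arccos$ of the trace is not differentiable there; your write-up already respects this, and it is the one place where the paper's own claim that ``$\theta$ is $\calC^1$'' is imprecise.
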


\begin{proof}
The proof is divided into two parts, one for each controller \eqref{e:Asy-single} and \eqref{e:FTT-single}.

\textsf{Part I:} In this part, we prove that by using controller \eqref{e:Asy-single}, the asymptotic tracking is achieved and the singularity is avoided. We can write the closed-loop as
\begin{equation*}
\begin{aligned}
\dot{R}_r & = R_r \omega_r \\
\dot{R}_1 & = R_1 (\log(R^{-1}_1R_r) + \omega_r)
\end{aligned}
\end{equation*}

Notice that the singularity only happens at $\theta = \arccos(\frac{\trace(R^\top_rR_1)-1}{2}) =\pi$, hence we only need to show that $\theta(t) \in [0,\pi)$ for all $t\geq 0$. Notice that 
\begin{equation}\label{e:gradient_theta}
\begin{aligned}
\frac{\partial \theta}{\partial R_r} & = \frac{-1}{\sqrt{1-\Delta^2}} \frac{\partial \Delta}{\partial R_r}  = \frac{-1}{2\sqrt{1-\Delta^2}}R_1, \\
\frac{\partial \theta}{\partial R_1} & = \frac{-1}{\sqrt{1-\Delta^2}} \frac{\partial \Delta}{\partial R_1}  = \frac{-1}{2\sqrt{1-\Delta^2}}R_r,
\end{aligned} 
\end{equation}
where $\Delta = \frac{\trace(R^\top_1R_r)-1}{2}$. Then we have 
\begin{align}
\dot{\theta}(t)  = & \trace (\frac{\partial ^\top \theta}{\partial R_r}\dot{R}_r + \frac{\partial ^\top \theta}{\partial R_1}\dot{R}_1) \\
 = & \frac{-1}{2\sqrt{1-\Delta^2}} \trace \Big(  R^\top_1 R_r \omega_r +  R^\top_r R_1 \omega_r \\ 
 &  R^\top_r R_1 \log(R^\top_1 R_r) \Big) \\
 = & \frac{-1}{2\sqrt{1-\Delta^2}} \trace \Big( R^\top_r R_1 \log(R^\top_1 R_r) \Big) \\
 = & \frac{-1}{2\sqrt{1-\Delta^2}} \frac{\theta}{\sin(\theta)} \trace \Big( I-R^\top_rR_1 R^\top_rR_1 \Big) \\
 \leq & 0
\end{align}
where the last inequality is based on the fact that $R^\top_rR_1 R^\top_rR_1\in SO(3)$. This proves that if the singularity is avoid at the initialization, then it is avoided along the trajectory.

Then consider the Lyapunov function $W(R_r,R_1)=d_R^2(R_r,R_1) = \frac{1}{2}\|\log(R^\top_r R_1) \|_F^2$, and we have 
\begin{align}
\frac{\partial W}{\partial R_r} & = - R_r \log(R^\top_rR_1) \\
\frac{\partial W}{\partial R_1} & = - R_1 \log(R^\top_1R_r)
\end{align}
and
\begin{align*}
\dot{W}(t)  = & \trace(\frac{\partial^\top W}{\partial R_r}\dot{R}_r + \frac{\partial^\top W}{\partial R_1}\dot{R}_1) \\
 = & - \trace ( \log^\top(R^\top_1R_r)\log(R^\top_1R_r) ) \\
 & + \trace \Big( \log^\top(R^\top_rR_1)\omega_r +\log^\top(R^\top_1R_r)\omega_r \Big) \\
 = & -  \trace \Big( \log^\top(R^\top_1R_r)\log(R^\top_1R_r) \Big) \\
 = & - 2 W.
\end{align*}
Hence by LaSalle-Yoshizawa Theorem (see e.g., \cite{Chen2015nonlinear}), the follower tracks the attitude of the target exponentially.

\textsf{Part II:}
In this part we prove that the finite-time tracking can be achieved by controller \eqref{e:FTT-single} and the singularity is avoided. The proof is similar to Part I. Hence we only provide the sketch.

For this case, we need to consider differential inclusion \eqref{e:inclusion1} since the discontinuity is present. Notice that the function $W$ and $\theta$ is $\calC^1$, hence regular. Then for $\theta \neq 0$, i.e., $R^\top_1R_r\neq I$, we have 
\begin{align*}
\calL_{\calF_1}\theta  = & \big\{\frac{-1}{2\sqrt{1-\Delta^2}} \frac{\theta}{\sin(\theta)} \frac{1}{\|\log(R^\top_1R_r)\|_F} \\
& \trace \Big( I-R^\top_rR_1 R^\top_rR_1 \Big) \big\} \\
\subset & \R_{-}.
\end{align*}
By the fact that $\theta$ is $\calC^1$ continuous, hence $\theta(R_r(t),R_1(t))$ is absolutely continuous and $\dot{\theta}(t)$ exists almost everywhere which belongs to $\calL_{\calF_1}\theta$. Then 
\begin{align}
\theta(t) = \int_{0}^{t} \dot{\theta}(\tau) d\tau + \theta(0) \leq \theta(0), 
\end{align}
which indicate the singularity is avoided. 
 
Next, we prove the finite-time tracking. Consider the error $V := W^\alpha$ with $\alpha > \frac{1}{2}$. Then the set-valued derivative is given as  
\begin{equation*}
\calL_{\calF_1}V  = 
\begin{cases}
\{ -\alpha\sqrt{2} V^{\beta} \}, & \textrm{ if } R_1^\top R_r \neq I \\
\{0 \}, & \textrm{ if } R_1^\top R_r = I
\end{cases} 
\end{equation*}
where $\beta = \frac{2\alpha-1}{2\alpha}\in (0,1)$. Notice that 
\begin{align}
\{ (R_r,R_1)\mid  0\in \calL_{\calF_1} V \} = \{(R_r,R_1) \mid V = 0 \},
\end{align} 
and $\dot{V}$ exists when $V\neq 0$, and $\dot{V}$ exists almost everywhere when $V=0$ (by the fact that $V$ is $\calC^1$, hence regular) and $\dot{V}\subset \calL_{\calF_1}V = \{0\}$. In other words, we have 
\begin{align}
\dot{V} = -\alpha\sqrt{2} V^{\beta}, \textnormal{ for } V\neq 0
\end{align}
with $\beta \in (0,1)$, which implies that $V$ converge to the origin in finite time (see, e.g., \cite{Haimo1986,Ghasemi2014}). Hence we the follower tracks the attitude of the target in finite time. 
   
%Then there exists $T>0$, such that all the solutions $(R_r,R_1):[0,\infty)\rightarrow SO(3)\times SO(3)$ of \eqref{e:inclusion1} starting from $(R_r(0),R_1(0))$ have $V(R_r(t),R_1(t)=0$ for all $t\geq T$. Otherwise, if for any $T>0$, there exists a solution $(R_r,R_1)$ of \eqref{e:inclusion1} starting from $(R_r(0),R_1(0))$ have $V(R_r(t),R_1(t)=0$ for all $t\geq T$.
\end{proof}

In the controller \eqref{e:Asy-single} and \eqref{e:FTT-single}, it is assumed that the geodesic state difference is available. In the rest part of this section, we show that the same conclusion as in Theorem \ref{thm: known-ref-geod} can be derived for the controller with Frobenius difference, which is relative information as well, i.e.,
\begin{align}
\omega_{1,a} & =  R^{\top}_1R_r - R^{\top}_rR_1 + \omega_r, \label{e:Asy-single_Fro} \\
\omega_{1,f} & = \frac{1}{\|R_1-R_r\|_F} \big(R^{\top}_1R_r - R^{\top}_rR_1 \big) + \omega_r,\label{e:FTT-single_Fro}.
\end{align}

\begin{corollary}\label{thm: known-ref-fro}
Consider system \eqref{e:basic_model_comp}. Assume the system initialized without singularity, i.e., $ \arccos(\frac{\trace(R^\top_r(0)R_1(0))-1}{2}) \neq \pi$. Then 
\begin{enumerate}
\item the singularity is avoided for all time for both controller \eqref{e:Asy-single_Fro} and \eqref{e:FTT-single_Fro};
\item the attitude $R_1$ tracks $R_r$ exponentially and in finite time, respectively, by \eqref{e:Asy-single_Fro} and \eqref{e:FTT-single_Fro}. For \eqref{e:FTT-single_Fro}, the conclusion holds for all the solutions.
\end{enumerate} 
\end{corollary}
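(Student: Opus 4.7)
The plan is to mirror the two-part structure of the proof of Theorem~\ref{thm: known-ref-geod} but replace the geodesic Lyapunov function by its Frobenius counterpart. Throughout, let $R:=R_1^\top R_r$ so that $\trace(R)=1+2\cos\theta$ and $\trace(R^2)=1+2\cos(2\theta)$; note also that $R_1^\top R_r-R_r^\top R_1=R-R^\top$, $\|R_1-R_r\|_F=2\sqrt{2}\sin(\theta/2)$, and $\log(R)=\tfrac{\theta}{2\sin\theta}(R-R^\top)$. These identities let us convert all Frobenius quantities into scalar functions of the single angle $\theta$, exactly as in the geodesic case.

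\textbf{Part I (asymptotic tracking via \eqref{e:Asy-single_Fro}).} I would take the candidate Lyapunov function $W(R_r,R_1)=\tfrac12\|R_1-R_r\|_F^2=3-\trace(R_1^\top R_r)$, which is $\mathcal{C}^1$. A direct differentiation, using $\dot R_r=R_r\omega_r$, $\dot R_1=R_1\omega_{1,a}$, and the skew-symmetry of $\omega_r$ together with the cyclic property of the trace, should cancel all $\omega_r$-terms and yield
\begin{equation*}
\dot W=\trace\bigl((R-R^\top)R\bigr)=\trace(R^2)-3=-4\sin^2\theta=-4W\cos^2(\theta/2).
\end{equation*}
To rule out the singularity I would repeat the computation from \eqref{e:gradient_theta}: the same cancellation gives $\dot\theta=-\tfrac{1}{2\sin\theta}\trace(R_r^\top R_1(R-R^\top))=-2\sin\theta\le 0$, so $\theta(t)\le\theta(0)<\pi$ for all time. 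Consequently $\cos^2(\theta/2)\ge\cos^2(\theta(0)/2)=:c_0>0$, and $\dot W\le -4c_0\,W$ yields exponential convergence of $W$ to zero, hence of $R_1$ to $R_r$.

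\textbf{Part II (finite-time tracking via \eqref{e:FTT-single_Fro}).} Here the closed loop is discontinuous at $R_1=R_r$ only; elsewhere it is smooth, so I would set up the analogue of the Filippov inclusion \eqref{e:inclusion1} and use the set-valued Lie derivative machinery. Using the same Lyapunov function $W$ (which is $\mathcal{C}^1$, hence regular) and the scaling factor $\|R_1-R_r\|_F=2\sqrt{2}\sin(\theta/2)$, the same computation gives
\begin{equation*}
\dot W=-4\sqrt{2}\,\sin(\theta/2)\cos^2(\theta/2)=-2\sqrt{2}\,\cos^2(\theta/2)\sqrt{W}\quad\text{off the equilibrium},
\end{equation*}
and analogously $\dot\theta=-\sqrt{2}\cos(\theta/2)$, so $\mathcal{L}_{\calF}\theta\subset\R_{-}$ for $\theta\in(0,\pi)$ and singularity avoidance follows as in Part~I of the theorem by integrating $\dot\theta$. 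With $c_0:=\cos^2(\theta(0)/2)>0$ fixed by initial conditions, the set-valued Lie derivative then satisfies $\mathcal{L}_{\calF}W\subset\{-2\sqrt{2}\,\cos^2(\theta/2)\sqrt{W}\}\subset(-\infty,-2\sqrt{2}c_0\sqrt{W}]$ whenever $W>0$, and $\mathcal{L}_{\calF}W=\{0\}$ at $W=0$. This is of the form $\dot W\le -k\,W^{1/2}$ with $k>0$, which gives finite-time convergence by the standard comparison argument invoked in Theorem~\ref{thm: known-ref-geod}. Because the strict inequality holds uniformly along every Filippov solution with the given initial data, the conclusion applies to all solutions.

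The main obstacle I expect is the bookkeeping that shows all $\omega_r$-dependent terms cancel and that $\trace((R-R^\top)R)$ collapses cleanly to $-4\sin^2\theta$; everything else (singularity avoidance, finite-time inequality, Filippov set-valued derivative) is then a close parallel of Theorem~\ref{thm: known-ref-geod}. A minor subtlety is verifying that the Filippov regularization at the equilibrium $R_1=R_r$ is bounded: since $(R-R^\top)/\|R_1-R_r\|_F$ has a bounded limit as $\theta\to 0$ (it equals $\sqrt{2}\cos(\theta/2)\,\hat n$ along the rotation axis), the Filippov set is well-defined and $W=0$ is strongly invariant, completing the argument.
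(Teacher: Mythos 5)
Your proof is correct and follows essentially the same route as the paper: the same Lyapunov function $W=3-\trace(R_r^\top R_1)$, the same monotonicity argument $\dot\theta\le 0$ for singularity avoidance, and a comparison-based finite-time argument for the discontinuous controller. The one place you genuinely diverge is the exponential-rate step: the paper first invokes LaSalle--Yoshizawa for asymptotic convergence and then argues that $\trace(R_r^\top R_1R_r^\top R_1)\le\trace(R_r^\top R_1)$ holds after some time $T$, whereas you use $\theta(t)\le\theta(0)<\pi$ to bound $\cos^2(\theta/2)\ge\cos^2(\theta(0)/2)=c_0>0$ for all $t$, which gives the uniform estimate $\dot W\le -4c_0W$ from $t=0$; this is cleaner and more quantitative than the paper's ``there exists $T$'' argument. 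Similarly, in Part II the paper only sketches the argument via $V=W^\alpha$ as in Theorem~\ref{thm: known-ref-geod}, while you work with $W$ directly and obtain $\dot W\le -2\sqrt{2}c_0 W^{1/2}$; both are the standard finite-time comparison, but your version explicitly retains and lower-bounds the $\cos^2(\theta/2)$ factor (absent in the geodesic case), a detail the paper's sketch glosses over and which is actually needed for the Frobenius controller.
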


\begin{proof}
Here the proof is similar to the one of Theorem \ref{thm: known-ref-geod}, hence we only provide the sketch. Here the proof is again divided into two parts.

\textsf{Part I:}
First, by \eqref{e:gradient_theta}, we have 
\begin{align}
\dot{\theta}(t)  = & \trace (\frac{\partial ^\top \theta}{\partial R_r}\dot{R}_r + \frac{\partial ^\top \theta}{\partial R_1}\dot{R}_1) \\
 = & \frac{-1}{2\sqrt{1-\Delta^2}} \trace \Big(  R^\top_1 R_r \omega_r +  R^\top_r R_1 \omega_r \\ 
 &  R^\top_r R_1 (R^\top_1 R_r- R^\top_r R_1) \Big) \\
 = & \frac{-1}{2\sqrt{1-\Delta^2}} \trace \Big( R^\top_r R_1 (R^\top_1 R_r- R^\top_r R_1) \Big) \\
 = & \frac{-1}{2\sqrt{1-\Delta^2}} \trace \Big( I-R^\top_rR_1 R^\top_rR_1 \Big) \\
 \leq & 0.
\end{align}
Hence the singularities are avoided along the trajectory, i.e., the rotation matrices $R_r(t)^\top R_1(t)\neq E_i, i=1,2,3$ if the equality does not hold for $R_r(0)^\top R_1(0)$.

Then consider the Lyapunov function $W(R_r,R_1)= \frac{1}{2} d_F^2(R_r,R_1) = 3-\trace{R^\top_r R_1}$, then
\begin{align*}
\dot{W}(t)  = & - \trace(R^\top_r\dot{R}_1 + \dot{R}^\top_rR_1) \\
 = & - \trace ( R^\top_r R_1 \omega_r + \omega^\top_r R^\top_rR_1) \\
 & - \trace \Big( R^\top_r R_1(R^\top_1 R_r-R^\top_r R_1) \Big) \\
 = & -  \trace \Big( I - R^\top_r R_1R^\top_r R_1 \Big) \\
 \leq & 0.
\end{align*}
Hence by LaSalle-Yoshizawa Theorem (see e.g., \cite{Chen2015nonlinear}), the follower tracks the attitude of the target asymptotically. Moreover, as the $\theta\rightarrow 0$ asymptotically, there exists $T$ such that for any $t\geq T$, we have 
\begin{align}
\trace(R^\top_r R_1R^\top_r R_1) \leq \trace{R^\top_r R_1}.
\end{align}
Hence for $t\leq T$, $\dot{W} \leq - W$. This implies the convergence is in fact exponential.

\textsf{Part II:}
The conclusion for controller \eqref{e:FTT-single_Fro} can be derived similar to the proof of Theorem \ref{thm: known-ref-geod}, by using the Lyapunov function $V = W^\alpha$ for $\alpha \in (\frac{1}{2},\infty)$. 
\end{proof}

\begin{remark}
For the finite-time tracking controller \eqref{e:FTT-single} and \eqref{e:FTT-single_Fro}, one closely related work is \cite{Du2011}. Compare the result here to the one in Section III in \cite{Du2011}, which assumes that the absolute attitude, the bounded velocity, the bounded acceleration of the target are available to the follower, the advantages of our controllers are that the control laws are very intuitive, that we do not assume that the desired velocity is bounded, and that only relative measurement is needed, i.e., the geodesic and Frobenius difference.
\end{remark}

\section{Simulations}\label{s:simulation}

In this section, we consider a specific trajectory of the target $R_r$ which is governed by 
\begin{align*}
\dot{R}_r = R_r \omega_r
\end{align*} 
where $\omega^\vee_r (t) = t\sin(3\mathds{1}_3t)$. Notice that the reference velocity $\omega^\vee_r (t)$ is unbounded which is more general than the assumption in \cite{Du2011}. Here we present the simulation results of the control protocols \eqref{e:Asy-single},\eqref{e:FTT-single},\eqref{e:Asy-single_Fro} and \eqref{e:FTT-single_Fro}, respectively. To make the graphical results more compact, we only show the trajectories of $R_r(i,1)$ and $R_1(i,1)$ for $i=1,2$, respectively. All the dynamical systems are initialized randomly without singularities.

For the case when geodesic differences are available, the results are shown in Fig.\ref{fig:geo}. The dashed line is the trajectories of the target $R_r$ and the solid ones are of the follower. Asymptotic and finite-time tracking are depicted in Fig.\ref{fig:as_geo} and Fig.\ref{fig:ft_geo}, using controller \eqref{e:Asy-single} and \eqref{e:FTT-single}, respectively. The similar results for \eqref{e:Asy-single_Fro} and \eqref{e:FTT-single_Fro} are shown in Fig.\ref{fig:fro}.

\begin{figure}[t!]
%\captionsetup[subfigure]{justification=raggedleft}
\centering
\begin{subfigure}[t]{0.5\textwidth}
\includegraphics[width=1\textwidth]{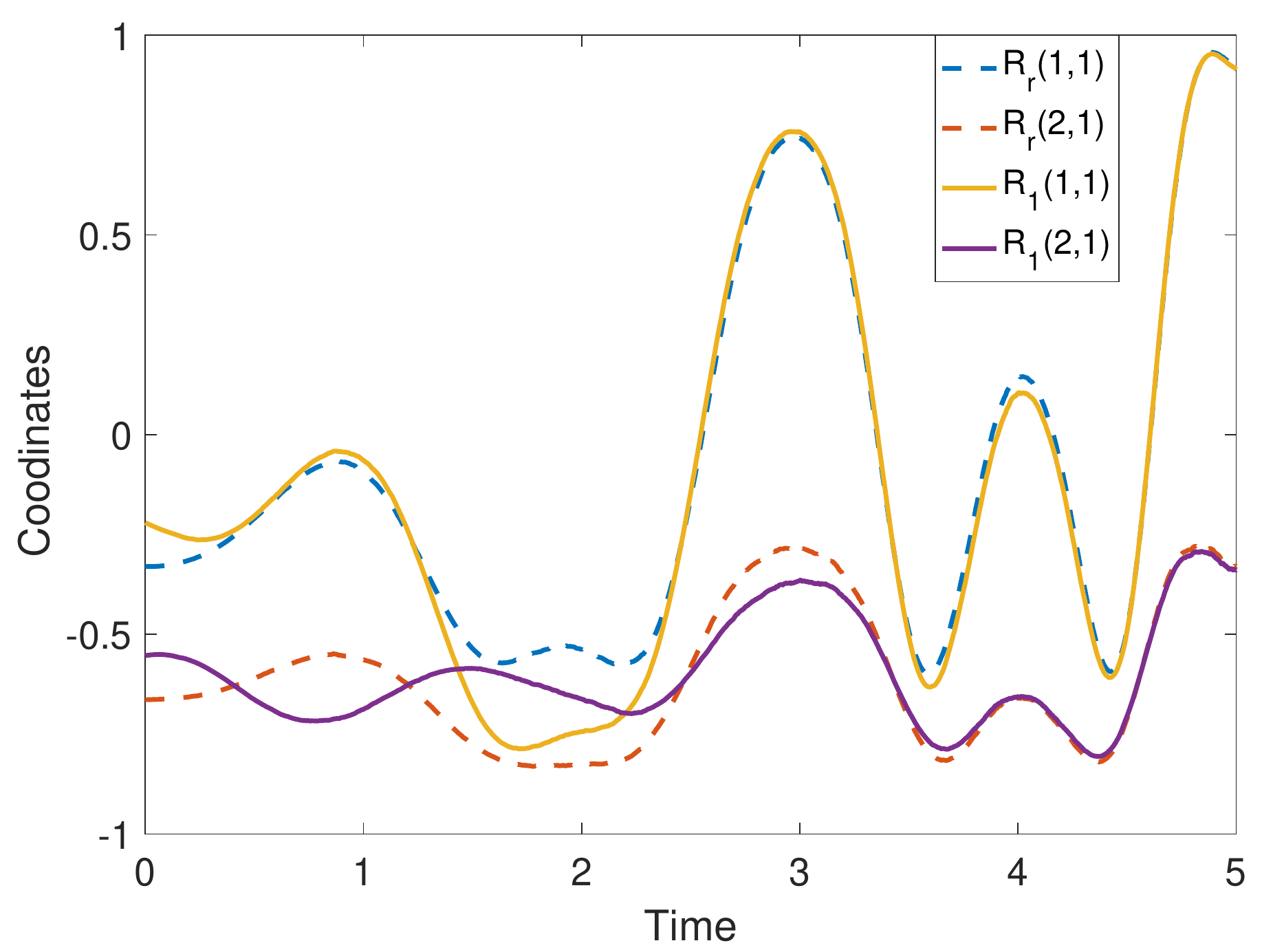}
\caption{Evolution of the coordinates of $R_r(i,1), R_1(i,1),i=1,2$ of the system controlled by \eqref{e:Asy-single}.}\label{fig:as_geo}
\end{subfigure}
~
\begin{subfigure}[t]{0.5\textwidth}
\includegraphics[width=1\textwidth]{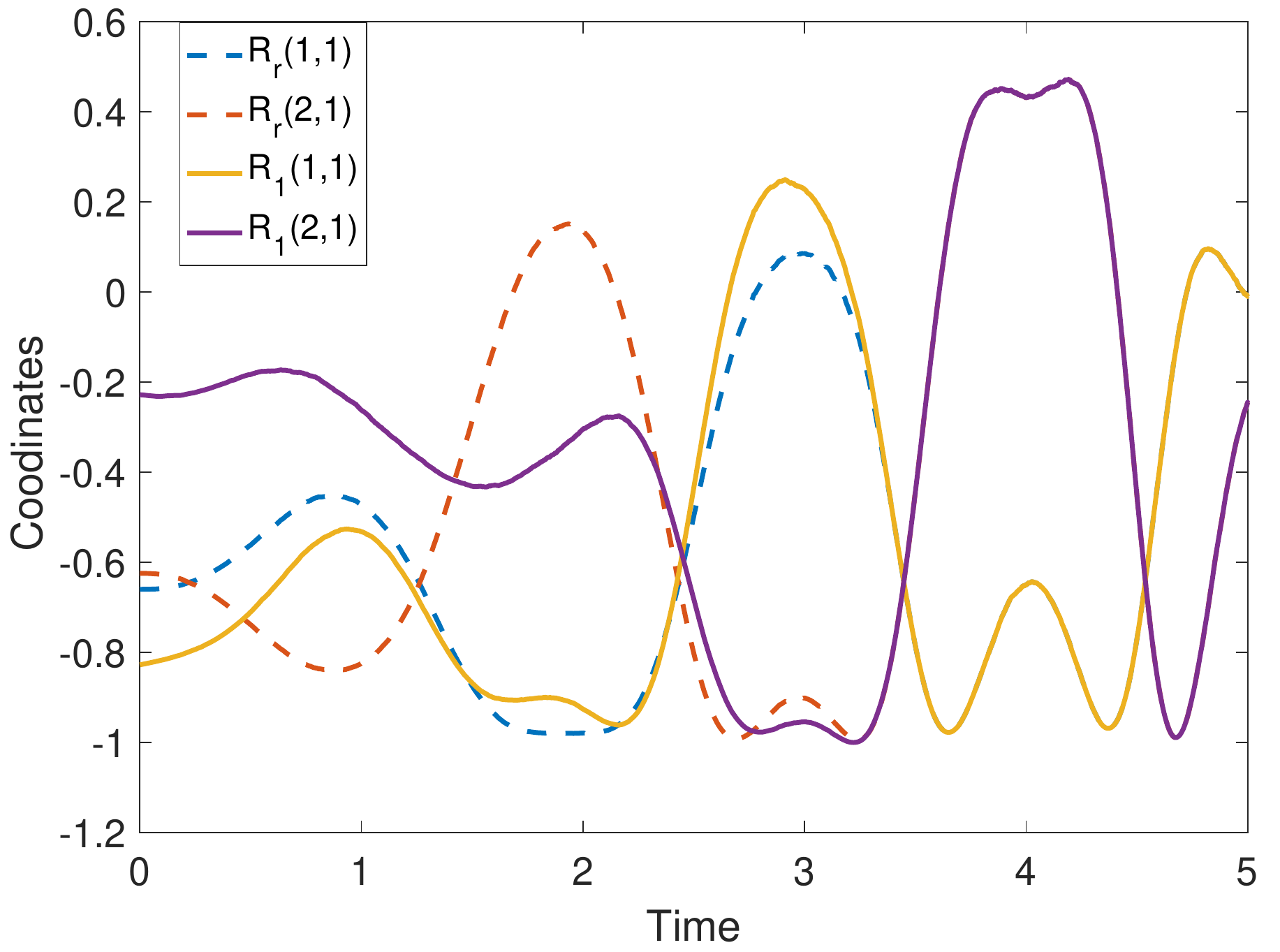}
\caption{Evolution of the coordinates of $R_r(i,1), R_1(i,1),i=1,2$ of the system controlled by \eqref{e:FTT-single}. Finite-time consensus is achieved.}\label{fig:ft_geo}
\end{subfigure}
\caption{The simulation using geodesic difference.}\label{fig:geo}
\end{figure}

\begin{figure}[t!]
%\captionsetup[subfigure]{justification=raggedleft}
\centering
\begin{subfigure}[t]{0.5\textwidth}
\includegraphics[width=1\textwidth]{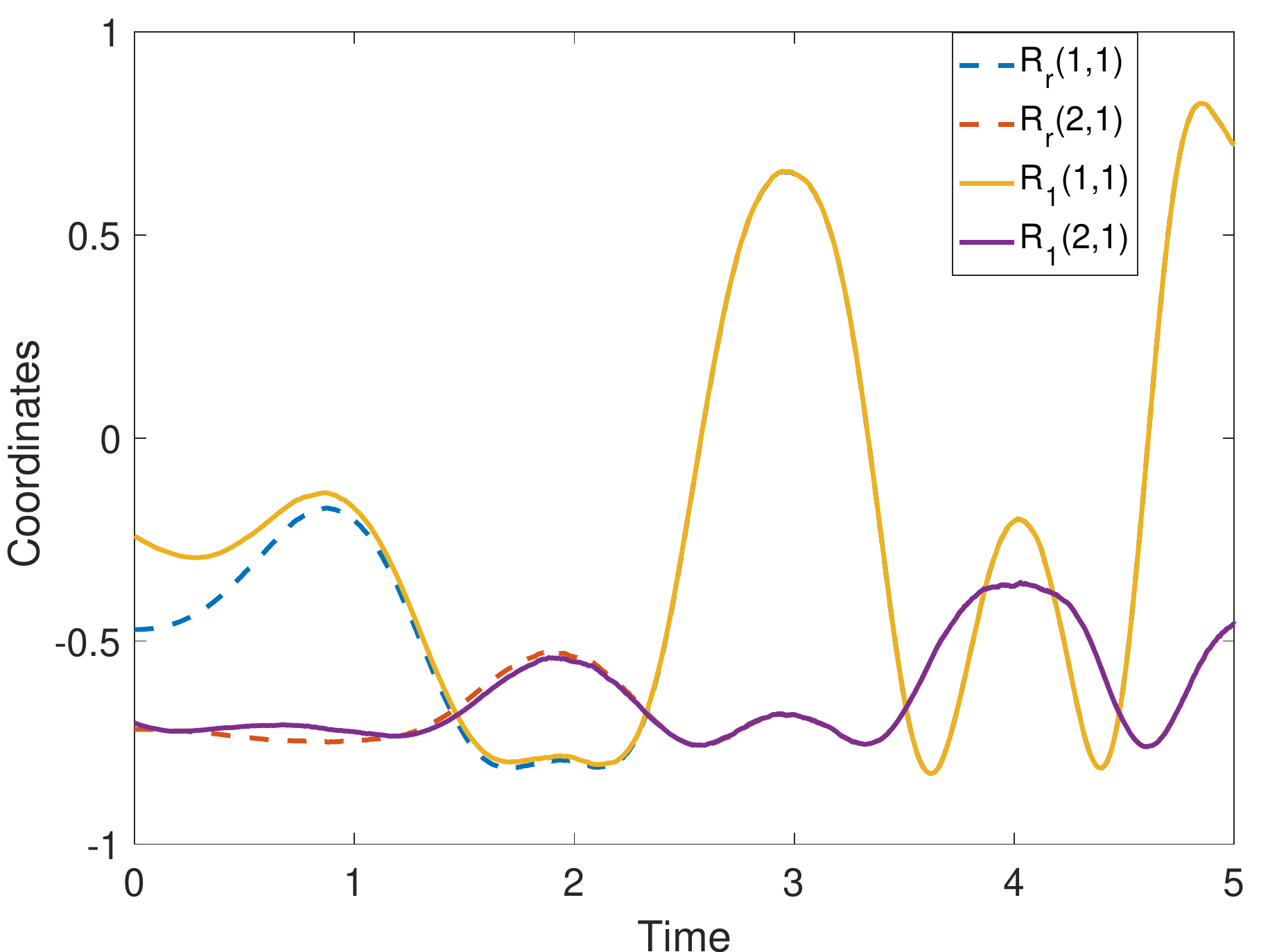}
\caption{Evolution of the coordinates of $R_r(i,1), R_1(i,1),i=1,2$ of the system controlled by \eqref{e:Asy-single_Fro}.}\label{fig:as_fro}
\end{subfigure}
~
\begin{subfigure}[t]{0.5\textwidth}
\includegraphics[width=1\textwidth]{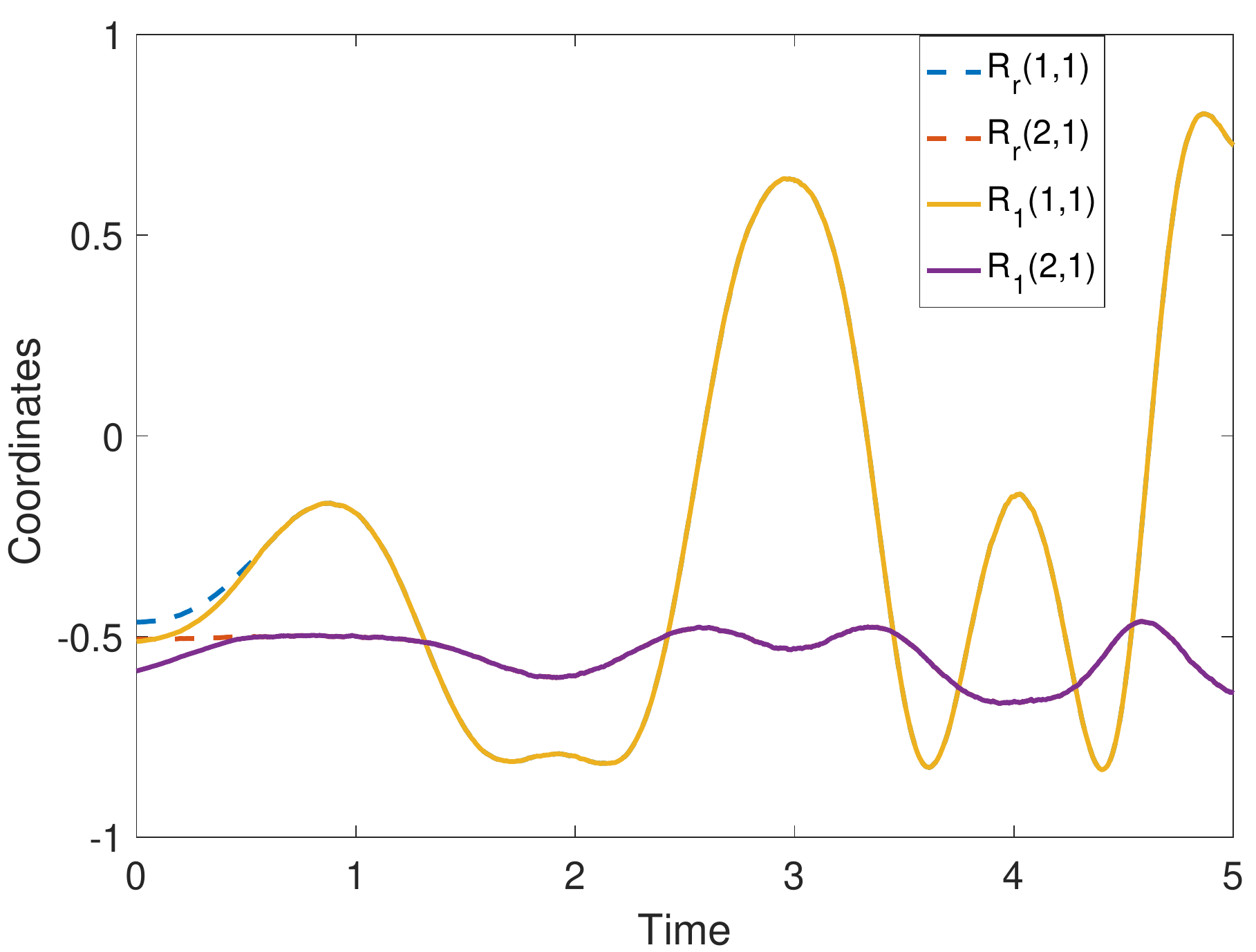}
\caption{Evolution of the coordinates of $R_r(i,1), R_1(i,1),i=1,2$ of the system controlled by \eqref{e:FTT-single_Fro}. Finite-time consensus is achieved.}\label{fig:ft_fro}
\end{subfigure}
\caption{The simulation using Frobenius difference.}\label{fig:fro}
\end{figure}

\section{Conclusion}\label{s:conclusion}

In this paper, we consider the asymptotic and finite-time attitude tracking problem. Based on the geodesic state difference, one asymptotic and finite-time tracking protocols are proposed. These protocols stabilize the system almost globally, i.e., the state of the follower tracks the attitude of the target if the system is initialized without singularity. For the finite-time controller, the solution of the closed-loop system is understood in the sense of Filippov. Similar protocols, asymptotic and finite-time one, are proposed if the Frobenius state differences are available. Future topics include estimation of the reference velocity using internal model principle, and tracking protocols using adaptive control mechanisms e.g.,prescribed performance control.

%\appendix
%\section{Proof of Theorem~\ref{th:main}}

%\end{proof}

\bibliographystyle{plain} % use IEEEtran.bst style
\bibliography{ref,C:/Users/bartb/Documents/Literature/all}% name of bib file

\begin{thebibliography}{10}

\bibitem{Athanasopoulos2014}
N.~Athanasopoulos, M.~Lazar, C.~B{\"{o}}hm, and F.~Allg{\"{o}}wer.
\newblock {On stability and stabilization of periodic discrete-time systems
  with an application to satellite attitude control}.
\newblock {\em Automatica}, 50(12):3190--3196, 2014.

\bibitem{Bhat00scl}
S.~Bhat and D.~Bernstein.
\newblock A topological obstruction to continuous global stabilization of
  rotational motion and the unwinding phenomenon.
\newblock {\em Systems \& Control Letters}, 39(1):63--70, 2000.

\bibitem{Bollobas98}
B.~Bollobas.
\newblock {\em Modern Graph Theory}, volume 184 of {\em Graduate Texts in
  Mathematics}.
\newblock Springer, New York, 1998.

\bibitem{Bower1964}
J.~Bower and G.~Podraza.
\newblock {Digital implementation of time-optimal attitude control}.
\newblock {\em IEEE Transactions on Automatic Control}, 9(4):590--591, 1964.

\bibitem{brockett83asymptotic}
R.~W. Brockett.
\newblock Asymptotic stability and feedback stabilization.
\newblock In {\em Differential Geometric Control Theory (R. W. Brockett, R. S.
  Millman and H. J. Sussmann, Eds)}, pages 181--191. Birkhauser, Boston, 1983.

\bibitem{Chen2015nonlinear}
Z.~Chen and J.~Huang.
\newblock {\em Stabilization and Regulation of Nonlinear Systems: A Robust and
  Adaptive Approach}.
\newblock Advanced Textbooks in Control and Signal Processing. Springer
  International Publishing, 2015.

\bibitem{Clarke1990optimization}
F.~H. Clarke.
\newblock {\em Optimization and Nonsmooth Analysis}.
\newblock Classics in Applied Mathematics. Society for Industrial and Applied
  Mathematics, 1990.

\bibitem{cortes2008}
J.~Cort\'{e}s.
\newblock Discontinuous dynamical systems.
\newblock {\em Control Systems, IEEE}, 28(3):36--73, 2008.

\bibitem{dong2016attitude}
Y.~Dong and Y.~Ohta.
\newblock Attitude synchronization of rigid bodies via distributed control.
\newblock In {\em Proceedings of the 55th IEEE Conference on Decision and
  Control}, pages 3499--3504. IEEE, 2016.

\bibitem{Du2011}
H.~Du, S.~Li, and C.~Qian.
\newblock Finite-time attitude tracking control of spacecraft with application
  to attitude synchronization.
\newblock {\em IEEE Transactions on Automatic Control}, 56(11):2711--2717, Nov
  2011.

\bibitem{filippov1988}
A.F. Filippov and F.M. Arscott.
\newblock {\em Differential Equations with Discontinuous Righthand Sides:
  Control Systems}.
\newblock Mathematics and its Applications. Springer, 1988.

\bibitem{Fischer2013}
N.~Fischer, R.~Kamalapurkar, and W.~E. Dixon.
\newblock Lasalle-yoshizawa corollaries for nonsmooth systems.
\newblock {\em IEEE Transactions on Automatic Control}, 58(9):2333--2338, 2013.

\bibitem{Ghasemi2014}
Masood Ghasemi, Sergey~G. Nersesov, and Garrett Clayton.
\newblock Finite-time tracking using sliding mode control.
\newblock {\em Journal of the Franklin Institute}, 351(5):2966 -- 2990, 2014.

\bibitem{Haimo1986}
V.~T. Haimo.
\newblock Finite time controllers.
\newblock {\em SIAM Journal on Control and Optimization}, 24(4):760--770, 1986.

\bibitem{Kowalik1970}
H.~Kowalik.
\newblock {A spin and attitude control system for the Isis-I and Isis-B
  satellites}.
\newblock {\em Automatica}, 6(5):673--682, sep 1970.

\bibitem{lee2015global}
T.~Lee.
\newblock Global exponential attitude tracking controls on {SO}(3).
\newblock {\em IEEE Transactions on Automatic Control}, 60(10):2837--2842,
  2015.

\bibitem{Lee2010}
T.~Lee, M.~Leoky, and N.~H. McClamroch.
\newblock Geometric tracking control of a quadrotor uav on se(3).
\newblock In {\em 49th IEEE Conference on Decision and Control (CDC)}, pages
  5420--5425, 2010.

\bibitem{LEE2012}
Taeyoung Lee.
\newblock Exponential stability of an attitude tracking control system on so(3)
  for large-angle rotational maneuvers.
\newblock {\em Systems \& Control Letters}, 61(1):231 -- 237, 2012.

\bibitem{li2012decentralized}
J.~Li and K.~D. Kumar.
\newblock Decentralized fault-tolerant control for satellite attitude
  synchronization.
\newblock {\em IEEE Transactions on Fuzzy Systems}, 20(3):572--586, 2012.

\bibitem{ma2012invitation}
Y.~Ma, S.~Soatto, J.~Kosecka, and S.~Sastry.
\newblock {\em An Invitation to 3-D Vision: From Images To Geometric Models},
  volume~26.
\newblock Springer Science \& Business Media, 2012.

\bibitem{ZXLi}
R.~Murray, Z.~Li, and S.~Sastry.
\newblock {\em A Mathematical Introduction To Robotic Manipulation}.
\newblock CRC press, 1994.

\bibitem{paden1987}
B.~Paden and S.~Sastry.
\newblock A calculus for computing {F}ilippov's differential inclusion with
  application to the variable structure control of robot manipulators.
\newblock {\em IEEE Transactions on Circuits and Systems}, 34(1):73--82, 1987.

\bibitem{pereira2016common}
P.O. Pereira, D.~Boskos, and D.V. Dimarogonas.
\newblock A common framework for attitude synchronization of unit vectors in
  networks with switching topology.
\newblock In {\em Proceedings of the 55th IEEE Conference on Decision and
  Control}, pages 3530--3536, 2016.

\bibitem{pettersen1996position}
K.Y. Pettersen and O.~Egeland.
\newblock Position and attitude control of an underactuated autonomous
  underwater vehicle.
\newblock In {\em Proceedings of the 35th IEEE Conference on Decision and
  Control}, volume~1, pages 987--991. IEEE, 1996.

\bibitem{ren2010distributed}
W.~Ren.
\newblock Distributed cooperative attitude synchronization and tracking for
  multiple rigid bodies.
\newblock {\em IEEE Transactions on Control Systems Technology},
  18(2):383--392, 2010.

\bibitem{sarlette2009autonomous}
A.~Sarlette, R.~Sepulchre, and N.~E. Leonard.
\newblock Autonomous rigid body attitude synchronization.
\newblock {\em Automatica}, 45(2):572--577, 2009.

\bibitem{schaub}
H.~Schaub and J.~L. Junkins.
\newblock {\em Analytical Mechanics of Space Systems}.
\newblock {AIAA} Education Series, Reston, VA, 2003.

\bibitem{Thunberg2016auto}
J.~Thunberg, J.~Goncalves, and X.~Hu.
\newblock Consensus and formation control on se (3) for switching topologies.
\newblock {\em Automatica}, 66:109--121, 2016.

\bibitem{Thunberg2014auto}
J.~Thunberg, W.~Song, E.~Montijano, Y.~Hong, and X.~Hu.
\newblock Distributed attitude synchronization control of multi-agent systems
  with switching topologies.
\newblock {\em Automatica}, 50(3):832 -- 840, 2014.

\bibitem{tron2012intrinsic}
R.~Tron, B.~Afsari, and R.~Vidal.
\newblock Intrinsic consensus on {SO}(3) with almost-global convergence.
\newblock In {\em Proceedings of the 51st IEEE Conference on Decision and
  Control}, pages 2052--2058, 2012.

\bibitem{Tsiotras1994}
P.~Tsiotras and J.~M. Longuski.
\newblock {Spin-axis stabilization of symmetric spacecraft with two control
  torques}.
\newblock {\em Systems {\&} Control Letters}, 23(6):395--402, 1994.

\bibitem{Zong2016}
Q.~Zong and S.~Shao.
\newblock Decentralized finite-time attitude synchronization for multiple rigid
  spacecraft via a novel disturbance observer.
\newblock {\em ISA Transactions}, 65:150 -- 163, 2016.

\end{thebibliography}

\end{document}